\newtheorem{theorem}{Theorem}
\newtheorem{lemma}{Lemma}
\newtheorem{proposition}{Proposition}
\theoremstyle{definition}
\newcommand{\C}{\mathbb{C}}
\newcommand{\B}{\mathbb{B}}
\renewcommand{\Im}{{\operatorname{Im}}}
\begin{document}

\title{Backward orbits in the unit ball}

\author{Leandro Arosio and Lorenzo Guerini}
\address{Dipartimento Di Matematica, Universit\`a di Roma ``Tor Vergata'', Via Della Ricerca Scientifica 1, 00133 Roma, Italy}
\email{arosio@mat.uniroma2.it}
\address{Korteweg de Vries Institute for Mathematics, University of Amsterdam, Science Park 107, 1090GE Amsterdam, the Netherlands}
\email{lorenzo.guerini92@gmail.com}
\thanks{L.~Arosio was supported by SIR grant NEWHOLITE -- ``New methods in holomorphic iteration'', no.~RBSI14CFME}
\subjclass[2010]{Primary 32H50; Secondary 32A40, 37F99}
\keywords{Backward orbits; canonical models;  holomorphic iteration.}
\begin{abstract}
We show that, if $f\colon \mathbb{B}^q\to \mathbb{B}^q$ is a holomorphic self-map of the unit ball in $\mathbb{C}^q$ and $\zeta\in \partial \mathbb{B}^q$ is a boundary repelling fixed point with dilation $\lambda>1$, then there exists a backward orbit converging to $\zeta$ with step $\log \lambda$. Morever, any two backward orbits converging to the same boundary repelling fixed point  stay at finite distance. As a consequence there exists a unique canonical pre-model $(\mathbb{B}^k,\ell, \tau)$ associated with $\zeta$ where $1\leq k\leq q$, $\tau$ is a hyperbolic automorphism of $\mathbb{B}^k$, and  whose image $\ell(\B^k)$ is precisely the set of starting points of backward orbits with bounded step converging to $\zeta$. This answers questions in \cite{Os} and \cite{Ar1,Ar2}.
\end{abstract}

\maketitle

\section{Introduction}
For a holomorphic self-map $f\colon \mathbb{B}^q\to \mathbb{B}^q$ of the unit ball of $\mathbb{C}^q$,  there is a strong interplay among the notions of  boundary repelling fixed points, backward orbits with bounded step and pre-models.

We start by recalling some definitions and elementary properties. Denote by  $k_{\B^q}$ the Kobayashi distance of the ball. A fundamental property of $k_{\B^q}$ is the following generalization to several complex variables of the classical Schwarz-Pick Lemma:  for every holomorphic self map $f\colon \B^q\rightarrow \B^q$ we have
$$
k_{\B^q}(f(z),f(w))\le k_{\B^q}(z,w),\qquad\forall z,w\in\B^q.
$$
In particular every $\gamma\in Aut(\B^q)$ is an isometry with respect to $k_{\B^q}$.
 Recall that $Aut(\B^q)$ acts transitively on $\B^q$.

The {\sl Koranyi region} with vertex $\zeta\in\partial \B^q$ and amplitude $M>1$ is defined as
$$
K(\zeta,M):=\left\{z\in \mathbb{B}^q\colon k_{\mathbb{B}^q}(0,z)+\lim_{w\to \zeta}(k_{\mathbb{B}^q}(z,w)-k_{\mathbb{B}^q}(0,w))<2\log M\right\}.
$$

Koranyi regions are a several variables generalization of the classical Stolz regions in the unit disc, but if $q>1$ they are non-tangential to $\partial \B^q$ only in the complex normal direction, while they are tangent  to $\partial \B^q$ in the complex tangential directions. If  $\eta$ denotes the ray connecting the origin to $\zeta$, then Koranyi regions are comparable to the sets of the form $A(\eta,L):=\{z\in \B^q\colon k_{\B}(z,\zeta)<L\}$.  In particular a sequence converging to $\zeta$ is contained in a Koranyi region $K(\zeta,M)$ for some $M>1$ if and only if it is  contained in a region $A(\eta,L)$ for some $L>0$.

A point $\zeta\in \partial \mathbb{B}^q$ is a {\sl  boundary regular fixed point} if 
\begin{enumerate}
\item $K\hbox{-}\lim_{z\to \zeta} f(z)=\zeta,$
which means by definition that if $(z_n)$ is a sequence converging to $\zeta$ inside a Koranyi region $K(\zeta,M)$ then $f(z_n)\to \zeta$,
\item  the {\sl dilation} $\lambda$ of $f$ at $\zeta$, defined  as $$\log \lambda:=\liminf_{z\to \zeta}\left(k_{\mathbb{B}^q}(0,z)-k_{\mathbb{B}^q}(0,f(z))\right),$$ satisfies
$\lambda<+\infty$.
 If $\lambda>1$ (resp. $=1$, resp. $<1$) the point $\zeta$ is {\sl repelling}, (resp. {\sl indifferent}, resp. {\sl attracting}).
\end{enumerate}
A sequence $(z_n)$ in $\mathbb{B}^q$ is a {\sl backward orbit} if $f(z_{n+1})=z_n$ for all $n\geq 0$. The {\sl step} of $(z_n)$ is 
$\sigma(z_n):=\lim_{n\to +\infty}k_{\B^q}(z_n,z_{n+1})\in (0,+\infty].$
A {\sl pre-model} for $f$  is
a triple $(\Lambda,h,\varphi)$, where $\Lambda$ is a complex manifold called the {\sl base space}, $h\colon \Lambda\to \mathbb{B}^q$ is a 
holomorphic mapping called the {\sl intertwining mapping} and $\varphi\colon \Lambda \to \Lambda$ is an automorphism, such that the following diagram commutes:
$$\xymatrix{\Lambda\ar[r]^{\varphi}\ar[d]_{h}& \Lambda\ar[d]^{h}\\
\mathbb{B}^q\ar[r]^{f}& \mathbb{B}^q.}$$
We say that a pre-model  $(\Lambda,h,\varphi)$ is {\sl associated} with the boundary repelling point $\zeta$ if  for some (and hence for any) $x\in \Lambda$ we have $\lim_{n\to \infty}h(\varphi^{-n}(x))= \zeta$.

Poggi-Corradini  \cite{PoCo1} (see also Bracci \cite{Br})  showed, in the case of the   unit disc $\mathbb{D}\subset \mathbb{C}$, that given  a boundary repelling fixed point $\zeta\in \partial \mathbb{D}$ 
one can find a backward orbit with  step $\log \lambda$ converging to $\zeta$  and use such orbit to obtain an essentially unique pre-model $(\mathbb{D},h,\tau)$ associated with $\zeta$, where $\tau$ is a hyperbolic automorphism of the disc with dilation $\lambda$ at its repelling point.

This result was partially generalized by Ostapuyk \cite{Os} in the unit ball $\mathbb{B}^q$. She proved that given an {\sl isolated} 
 boundary repelling fixed point $\zeta$ one obtains with a similar method a pre-model
 $(\mathbb{D},h,\tau)$ associated with $\zeta$, where $\tau$ is a hyperbolic automorphism of the disc with dilation $\lambda$ at its repelling point.
Since such pre-model has no uniqueness property and is one-dimensional, it is asked in \cite[Question 8]{Os} what is the structure of the {\sl stable subset} $\mathcal{S}(\zeta)$, that is the subset of starting points of backward orbits with bounded step converging to $\zeta$, and whether one can find a ``best possible'' pre-model associated with $\zeta$.

In recent works \cite{Ar1,Ar2} a partial answer to such questions was given using the theory of canonical pre-models (see also \cite{ArBr}). 
 To state such results we need to introduce some definitions. 
 An automorphism $\tau$ of the ball $\B^q$ without inner fixed points (that is, non-elliptic) has either one or  two fixed points at the boundary. If $\tau$ has one fixed point it is called {\sl parabolic}, and the fixed point is indifferent. If $\tau$ has two fixed points, then it is called {\sl hyperbolic}, and in this case   one of the fixed points is repelling with dilation $\mu>1$ and the other  one is attracting with dilation $\frac{1}{\mu}$. A hyperbolic automorphism $\tau $ is conjugated to the following automorphism of the Siegel model for the unit ball $\mathbb{H}^q=\left\{(z,w)\in \C\times \C^{q-1}, \Im(z)>\|w\|^2\right\},$ 
$$\tau(z,w)= \left(\frac{1}{\mu} z,\frac{e^{it_1}}{\sqrt \mu}w_1,\dots, \frac{e^{it_{q-1}}}{\sqrt \mu}w_{q-1} \right),$$
where $t_j\in \mathbb{R}$ for $1\leq j\leq q-1$.

If $(y_n)$ is a backward orbit for $f$, we denote $[y_n]$ the family of all backward orbits $(z_n)$ for $f$ such that the sequence $(k_{\mathbb{B}^q}(z_n, y_n))$ is bounded.
If $(\hat\Lambda,\hat h,\hat\varphi)$ and $(\Lambda,h,\varphi)$ are pre-models for $f$, a {\sl morphism} $\hat\eta\colon (\hat\Lambda,\hat h,\hat\varphi)\to(\Lambda,h,\varphi)$ is given by a holomorphic map $\eta\colon\hat\Lambda\rightarrow \Lambda$ such that the following diagram commutes: 
\SelectTips{xy}{12}
\[ \xymatrix{\hat\Lambda\ar[rrr]^{\hat h}\ar[rd]^\eta\ar[dd]^{\hat\varphi} &&& \mathbb{B}^q \ar[dd]^f\\
& \Lambda \ar[rru]^{h} \ar[dd]^(.25)\varphi
\\
\hat\Lambda\ar'[r][rrr]^(.25){\hat h} \ar[rd]^\eta &&& \B^q\\
& \Lambda \ar[rru]^{h}.}
\]
In other words,  there exists a morphism $\hat\eta\colon (\hat\Lambda,\hat h,\hat\varphi)\to(\Lambda,h,\varphi)$ if and only if the pre-model
$ (\hat\Lambda,\hat h,\hat\varphi)$ ``factors through'' the pre-model $(\Lambda,h,\varphi)$. If $\eta\colon \hat\Lambda\to \Lambda$ is a biholomorphism, we say that $\hat \eta$ is an {\sl isomorphism}.

It was shown in  \cite{Ar1,Ar2} that every class $[y_n]$  of backward orbits with bounded step converging to $\zeta$ gives rise in a natural way to a 
{\sl canonical} pre-model $(\B^k,\ell,\tau)$, where $ k$ is an integer  satisfying $1\leq k\leq q$ and possibly depending on the class $[y_n]$,
and  $\tau$ is a hyperbolic automorphism of the ball $\mathbb{B}^k$  with dilation $\mu\geq \lambda$ at its repelling point. Such model  satisfies $\ell(\tau^{-n}(x))\in [y_n]$  for all $x\in \B^k$, and  is thus associated with $\zeta$.
Moreover the canonical pre-model  satisfies the following universal property: if $(\Lambda,h,\varphi)$ is a pre-model  such that
$h(\varphi^{-n}(x))\in [y_n]$ for all $x\in \Lambda$, then there exists a unique morphism $\hat\eta\colon (\Lambda,h,\varphi)\to(\B^k,\ell,\tau)$.  It is easy to see that any pre-model
 $(\hat\Lambda,\hat h,\hat\varphi)$ such that $\hat h(\hat \varphi^{-n}(x))\in [y_n]$ for all $x\in \hat\Lambda$
  and which satisfies the same universal property has to be isomorphic to $(\B^k,\ell,\tau)$, hence the name ``canonical''.

The following questions were left open in  \cite{Ar1,Ar2}.
\begin{enumerate}
\item Does a non-isolated  boundary repelling fixed point admit a backward orbit with bounded step converging to it (and hence an associated canonical pre-model)?
\item Is it  possible that  a boundary repelling fixed point $\zeta$ is associated with two distinct canonical pre-models, corresponding to two different classes of backward orbits with bounded step converging to $\zeta$? 
\item Is the dilation of a canonical pre-model associated with $\zeta$ always equal to $\lambda$?
\end{enumerate}
In the main result of this paper we give an answer to these three questions, showing that  every boundary repelling fixed point $\zeta$ is associated  with exactly one  (up to isomorphisms) canonical pre-model $(\mathbb{B}^k, \ell,\tau)$, which has dilation $\lambda$.  The dimension  $k$ of the canonical pre-model is thus a dynamical invariant naturally associated with the fixed point $\zeta$.
\begin{theorem}\label{main}
Let $f\colon \mathbb{B}^q\to \mathbb{B}^q$ be a holomorphic self-map, and let $\zeta\in \partial \mathbb{B}^q$ be a  boundary repelling fixed point with dilation $\lambda>1$. 
Then there exist an integer $1\le k\le q$ and a pre-model $(\mathbb{B}^k, \ell,\tau)$ associated with $\zeta$ such that
\begin{enumerate}
\item $\tau$ is a hyperbolic automorphism of $\B^q$ with dilation $\lambda$ at its repelling point $R$,
\item $\ell(\B^k)$ coincides with the stable subset $\mathcal S(\zeta)$,
\item Universal property: if  $(\Lambda,h,\varphi)$ is a pre-model associated with $\zeta$, then there exists a morphism $\hat\eta\colon (\Lambda,h,\varphi)\to (\mathbb{B}^k, \ell,\tau)$,
\item $K\hbox{-}\lim_{z\to R}\ell(z)=\zeta.$

\end{enumerate}

\end{theorem}
Theorem \ref{main} follows from \cite[Theorem 1.3]{Ar2} once we prove the following two results.
\begin{theorem}\label{main1}
Let $f\colon \mathbb{B}^q\to \mathbb{B}^q$ be a holomorphic self-map, and let $\zeta\in \partial \mathbb{B}^q$ be a  boundary repelling fixed point with dilation $\lambda>1$. 
Then there exists a backward orbit $(z_n)$ with step $\log\lambda$ converging to $\zeta$.
\end{theorem}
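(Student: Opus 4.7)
The plan is to construct the desired backward orbit as a normal-families limit of finite backward orbits obtained by reversing long forward orbits that remain close to $\zeta$. First, using the definition $\log\lambda = \liminf_{z \to \zeta}(k_{\B^q}(0,z) - k_{\B^q}(0,f(z)))$, I would select a sequence $(w_n) \subset \B^q$ with $w_n \to \zeta$ and $k_{\B^q}(0,w_n) - k_{\B^q}(0,f(w_n)) \to \log\lambda$. Invoking a Julia-type estimate at a boundary repelling fixed point, I would arrange that each $w_n$ lies inside a fixed Koranyi region $K(\zeta,M)$.

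Next, because $\zeta$ is repelling, the forward iterates $f^j(w_n)$ eventually leave any such region. Letting $T(n)$ be the exit time from a suitably larger Koranyi region $K(\zeta,M')$, I would show $T(n) \to +\infty$ and establish a uniform per-step estimate $k_{\B^q}(0,f^j(w_n)) - k_{\B^q}(0,f^{j+1}(w_n)) = \log\lambda + o(1)$ valid for every $0 \leq j \leq T(n) - 1$ as $n \to \infty$. Reversing, I set $z_j^{(n)} := f^{T(n)-j}(w_n)$, which is a finite backward orbit of length $T(n)$ whose initial point $z_0^{(n)}$ lies at bounded hyperbolic distance from the origin. A diagonal extraction then yields $z_j^{(n)} \to z_j$ for every $j$, and by continuity $f(z_{j+1}) = z_j$, producing an infinite backward orbit.

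To finish, every $z_j$ lies in a Koranyi region of $\zeta$ and the telescoped lower bound forces $k_{\B^q}(0,z_j) \to +\infty$, so $z_j \to \zeta$. The step $\lim_j k_{\B^q}(z_j,z_{j+1})$ is at least $\log\lambda$ by the very definition of dilation, while the matching upper bound comes from the Koranyi-region estimates that compare $k_{\B^q}(z,f(z))$ with $k_{\B^q}(0,z) - k_{\B^q}(0,f(z))$ up to an $o(1)$ error for points deep inside such a region.

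The main obstacle is the second step: without the isolation hypothesis used in \cite{Os}, forward iterates of $w_n$ could in principle drift toward a competing boundary repelling fixed point and destroy the uniform single-step dilation estimate. Overcoming this requires a quantitative Julia-type propagation lemma showing that if a point lies deep in a Koranyi region at $\zeta$ with nearly extremal one-step dilation, then its image again lies in a controlled Koranyi region at $\zeta$ with the same near-extremal property, so that the estimate survives along the entire forward orbit of length $T(n) \to \infty$.
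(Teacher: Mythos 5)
Your outline reproduces the paper's skeleton (push points tending to $\zeta$ forward until they exit a fixed region, reverse the finite orbits, take a diagonal limit), but the three assertions you leave unproved are exactly where the work lies. First, the claim that the reversed initial points $z_0^{(n)}=f^{T(n)}(w_n)$ stay at bounded Kobayashi distance from the origin is only asserted: leaving a Koranyi region (or a horosphere) does not prevent the exit point from being arbitrarily close to $\partial \B^q$, for instance tangentially close to $\zeta$, so relative compactness of the exit points is a genuine theorem — it is Lemma \ref{boundedaway} of the paper and its technical heart, proved by renormalizing with automorphisms $\gamma_i$ fixing $e_1$ and deriving a contradiction with the triangle inequality along the radial geodesic. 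Second, the uniform two-sided per-step estimate $k_{\B^q}(0,f^j(w_n))-k_{\B^q}(0,f^{j+1}(w_n))=\log\lambda+o(1)$, and the ``quantitative Julia-type propagation lemma'' you yourself flag as necessary, are not supplied and are doubtful as stated: the $\liminf$ defining $\lambda$ gives only the lower bound for points near $\zeta$, and near-extremality of $w_n$ has no reason to propagate to $f^j(w_n)$. The paper never needs a per-step dilation estimate: it starts from the radial points $r_k$, where $k_{\B^q}(r_k,f(r_k))\to\log\lambda$ by \eqref{boundedstep}, and the distance-decreasing property of $f$ alone bounds \emph{every} subsequent step by $k_{\B^q}(r_k,f(r_k))$; moreover the exit times are defined with horospheres, which are propagated by Julia's lemma \eqref{Julia}, unlike Koranyi regions.

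Third, your argument that the limit orbit converges to $\zeta$ is circular. For fixed $j$ the limit point $z_j$ comes from points near the exit time, which are not close to $\zeta$, so you cannot telescope the lower bound $k_{\B^q}(0,z)-k_{\B^q}(0,f(z))\ge\log\lambda-\epsilon$: that bound is only available near $\zeta$, which is what you are trying to prove. What must be excluded is that the backward orbit accumulates at an interior point, and this requires input from the global dynamics of $f$: the paper uses the Denjoy--Wolff point when there are no interior fixed points, and otherwise the limit manifold $M$, conjugated so that $M\cap\overline{E_0}=\varnothing$ via \cite[Proposition 3.4]{AbBr}, to produce a compact $K\subset\overline{E_0}\cap\B^q$ with $f^n(K)\cap K=\varnothing$ for all large $n$, which is incompatible with a backward orbit having an interior accumulation point. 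Your sketch says nothing about the case of interior fixed points, and without such an argument the constructed sequence could a priori fail to leave every compact subset. A smaller gap: points nearly achieving the $\liminf$ need not lie in a fixed Koranyi region; the paper sidesteps this by working only with the radial points $r_k$. Once these three points are repaired your scheme essentially becomes the paper's proof, with horospheres replacing Koranyi regions as the natural stopping regions.
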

\begin{proposition}\label{main2}
Let  $(x_n)$ and $(y_n)$ be two  backward orbits with bounded step, both converging to the boundary repelling fixed point $\zeta\in \partial \mathbb{B}^q$. Then 
$$
\lim_{n\to\infty}k_{\B^q}(x_n,y_n)< \infty.
$$
\end{proposition}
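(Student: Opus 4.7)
The first observation is that the sequence $k_{\B^q}(x_n,y_n)$ is non-decreasing in $n$: since $f$ is a Kobayashi contraction with $f(x_{n+1})=x_n$ and $f(y_{n+1})=y_n$,
$$k_{\B^q}(x_n,y_n)=k_{\B^q}(f(x_{n+1}),f(y_{n+1}))\leq k_{\B^q}(x_{n+1},y_{n+1}),$$
so the limit exists in $[0,+\infty]$ and the task is to rule out $+\infty$.

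The approach I would take is to transfer the problem via a Cayley-type biholomorphism $\Psi\colon \B^q\to \mathbb{H}^q$ onto the Siegel upper half-space, sending $\zeta$ to the point at infinity. In these coordinates the BRFP condition translates into a ``vertical expansion by $\lambda$'' behaviour of the conjugate map at $\infty$, and the Kobayashi metric has an explicit expression amenable to horocyclic estimates. The backward orbits become sequences $\Psi(x_n),\Psi(y_n)\to\infty$, and the question becomes whether their mutual hyperbolic distance stays bounded.

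The proof would then proceed in three stages. First, combining the BRFP lower bound $k_{\B^q}(0,z_n)\geq n\log\lambda +O(1)$ (obtained by iterating the dilation inequality) with the bounded-step upper bound, I would show that both orbits $K$-converge to $\zeta$, i.e., lie eventually inside a Koranyi region with vertex $\zeta$. Next, using the explicit formula
$$1-\tanh^2 k_{\B^q}(z,w)=\frac{(1-|z|^2)(1-|w|^2)}{|1-\langle z,w\rangle|^2},$$
the problem reduces to bounding the ratio $|1-\langle x_n,y_n\rangle|^2/((1-|x_n|^2)(1-|y_n|^2))$ uniformly in $n$. Finally, Julia's lemma applied inside the Koranyi regions where both orbits live would furnish this bound via a direct decomposition of the cross term $1-\langle x_n,y_n\rangle$ into the individual horospheric quantities $1-\langle x_n,\zeta\rangle$ and $1-\langle\zeta,y_n\rangle$.

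The hard part is the first stage. A priori the two orbits may have different bounded steps, and the distances $k_{\B^q}(0,x_n)$ and $k_{\B^q}(0,y_n)$ could grow at different linear rates; compatibility with a uniform bound on the cross term $|1-\langle x_n,y_n\rangle|$ forces the two growth rates not only to agree with one another but to coincide with $\log\lambda$. Making this rigorous requires a delicate interplay between the Julia--Wolff--Carath\'eodory inequalities at a BRFP, the monotonicity of $k_{\B^q}(x_n,y_n)$ under $f$, and the geometry of Koranyi regions; this is where the bulk of the technical work should lie.
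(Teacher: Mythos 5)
Your opening observation (monotonicity of $k_{\mathbb{B}^q}(x_n,y_n)$ under $f$) and your first stage ($K$-convergence of backward orbits with bounded step, which the paper simply quotes from Ostapyuk) are fine. The genuine gap is in stages two and three: bounding $\frac{|1-\langle x_n,y_n\rangle|^2}{(1-|x_n|^2)(1-|y_n|^2)}$ requires knowing that the two orbits approach $\zeta$ at comparable rates \emph{with matched indices}, i.e.\ that $1-\|x_n\|$ and $1-\|y_n\|$ (equivalently $k_{\mathbb{B}^q}(0,x_n)$ and $k_{\mathbb{B}^q}(0,y_n)$) differ by a bounded amount. $K$-convergence alone gives nothing of the sort: already in the disc the radial sequences $1-\lambda^{-n}$ and $1-\lambda^{-2n}$ both converge non-tangentially to $1$ yet drift apart at linear speed, so Julia's lemma inside Kor\'anyi regions plus a decomposition of the cross term $1-\langle x_n,y_n\rangle$ cannot by itself produce the uniform bound. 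You acknowledge this yourself ("the hard part", "the bulk of the technical work"), but you only list tools (Julia--Wolff--Carath\'eodory estimates, monotonicity, Kor\'anyi geometry) without giving the argument; moreover the assertion that the rates of an arbitrary backward orbit with bounded step must equal $\log\lambda$ is essentially equivalent to the statement being proved (it is a consequence of the canonical model having dilation $\lambda$, i.e.\ of Theorem~\ref{main}), so invoking it here is circular unless proved independently.

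For comparison, the paper avoids any rate analysis. It first shows (Lemma~\ref{comparable} plus the quoted result of Ostapyuk) that both orbits lie in a bounded Kobayashi neighborhood $A(\gamma,L)$ of the \emph{same} real geodesic $\gamma$ ending at $\zeta$, and deduces that each $x_n$ is at uniformly bounded distance from the orbit $\eta=(y_m)$ \emph{as a set}. The index-matching is then done by Lemma~\ref{trasl}: if $k_{\mathbb{B}^q}(x_n,y_{\alpha(n)})<C$ for some integers $\alpha(n)$, then applying $f$ repeatedly (non-expansiveness) and a pigeonhole argument pins the shift $\alpha(n)-n$ to finitely many values, hence to a constant $\alpha$ along a subsequence, and the bounded step absorbs the shift, giving $\sup_n k_{\mathbb{B}^q}(x_n,y_n)<\infty$. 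This shift lemma, or some equivalent mechanism converting "bounded distance to the other orbit as a set" into "bounded distance with equal indices", is exactly what your proposal is missing; without it the plan does not close.
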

Indeed, these two results imply that the family of backward orbits with bounded step converging to $\zeta$ is not empty and is a unique equivalence class $[y_n]$. The pre-model $(\mathbb{B}^k, \ell,\tau)$ is then the canonical pre-model  given by $[y_n]$.

The method of proof of Theorem \ref{main1} is inspired by the proofs in \cite{PoCo1,Os}.
To get rid of the problems posed by boundary repelling points close to $\zeta$ we use horospheres to define stopping times of the iterative process instead of euclidean balls centered at $\zeta$. This approach thus works also when the boundary fixed point $\zeta$ is not isolated. On the other hand additional work has to be done to show that the iterative process still converges to a backward orbit.

\section{Proof of Theorem \ref{main1}}
Without loss of generality we may assume that $\zeta=e_1=(1,0,\dots,0)$.
Recall that the {\sl horosphere} of center $e_1$, pole 0,  and radius $R>0$ is defined as
\begin{align*}E_0\left(e_1,R\right)&:=\left\{z\in \mathbb B^q\colon \,\frac{|1-(z,e_1)|^2}{1-\Vert z\Vert^2}<R\right\}\\
&=\left\{z\in \mathbb B^q\colon \lim_{w\to e_1}\left(k_{\mathbb{B}^q}(z,w)-k_{\mathbb{B}^q}(0,w)\right)<\log R\right\}.
\end{align*}

For all $k\in \mathbb{Z}$, let $r_k=\left(\frac{\lambda^k-1}{\lambda^k+1},0,\dots,0\right)$ and denote   $E_k:=E_0\left(e_1,\frac{1}{\lambda^k}\right)$. Notice that $r_k\in\partial E_k$. We have that (see e.g. \cite{Os})
\begin{equation}\label{boundedstep}
\lim_{k\to\infty}k_{\mathbb{B}^q}(r_k,f(r_k))=\log\lambda.
\end{equation}
Furthermore, by the Julia's lemma (see e.g. \cite{Ab}) we have
\begin{equation}
\label{Julia}
f(E_k)\subset E_{k-1}.
\end{equation}

If $f$ has no interior fixed points, then it admits a Denjoy-Wolff point $p$ such that the sequence $(f^n)$ converges to $p$ uniformly on compact subsets. Since the dilation of $f$ at $p$ is  less than or equal to $1$, it immediately follows that $p$ is different from $e_1$. If $f$ admits interior fixed points, it admits a limit manifold $M$ which is a holomorphic retract of $\mathbb B^q$ (see e.g. \cite[Theorem 2.1.29]{Ab}). By \cite[Proposition 3.4]{AbBr} it follows  that $e_1\not\in \overline M$. Hence,  by conjugating the map $f$ with an automorphism of the ball fixing $e_1$, we may further assume that 
 $$ M \cap \overline E_0=\varnothing.$$ Hence, in both cases, for every $k\geq 0$ there exists a first $n(k)\geq 0$ so that $f^{n(k)}(r_k)\not \in  \overline E_0$. By \eqref{Julia} we have that $n(k)> k$. We write $z_k:=f^{n(k)}(r_k)$.

\begin{lemma}\label{boundedaway}
The sequence $(z_k)$ is bounded away from $e_1$.
\end{lemma}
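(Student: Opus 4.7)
My plan is a proof by contradiction. Suppose some subsequence $z_{k_j}\to e_1$, and set $p_{k_j}:=f^{n(k_j)-1}(r_{k_j})\in \overline{E_0}$, so $f(p_{k_j})=z_{k_j}$. By Kobayashi contractivity and \eqref{boundedstep}, $k_{\B^q}(p_{k_j},z_{k_j})\leq k_{\B^q}(r_{k_j},f(r_{k_j}))\to \log \lambda$; hence $k_{\B^q}(0,p_{k_j})\geq k_{\B^q}(0,z_{k_j})-k_{\B^q}(p_{k_j},z_{k_j})\to\infty$, so $p_{k_j}$ leaves every compact subset of $\B^q$. Since $p_{k_j}\in \overline{E_0}$ and $\overline{E_0}\cap \partial\B^q=\{e_1\}$, we get $p_{k_j}\to e_1$. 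Iterating this argument yields $f^{n(k_j)-m}(r_{k_j})\to e_1$ for every fixed $m\geq 1$; the matching lower bound from the dilation hypothesis, $k_{\B^q}(p_{k_j},z_{k_j})\geq k_{\B^q}(0,p_{k_j})-k_{\B^q}(0,f(p_{k_j}))\geq \log\lambda-o(1)$, even gives $k_{\B^q}(p_{k_j},z_{k_j})\to \log \lambda$ exactly.

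I would then pass to a Siegel upper half-space realization $\mathcal U$ of $\B^q$, in which $e_1$ corresponds to the point at infinity and the horospheres $\overline{E_j}$ become affine slabs in a natural horocyclic coordinate. Since the horocyclic function $h_{e_1}$ is $1$-Lipschitz with respect to $k_{\B^q}$, the step bound above forces the horocyclic coordinate of $p_{k_j}$ to remain in a bounded interval. The affine automorphisms of $\mathcal U$ fixing infinity act transitively on each horocyclic slice, so one can choose $\phi_{k_j}\in \Aut(\mathcal U)$ with $\phi_{k_j}(p_{k_j})=q_{k_j}$ a bounded sequence in $\mathcal U$; then $\phi_{k_j}(z_{k_j})$ lies in a fixed compact region of $\mathcal U$ (the Kobayashi ball of radius $\log \lambda+o(1)$ about $q_{k_j}$). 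By Montel, the conjugates $\widetilde f_{k_j}:=\phi_{k_j}\circ f\circ\phi_{k_j}^{-1}$ admit a subsequential holomorphic limit $\widetilde f^{\star}\colon \mathcal U\to \mathcal U$; a diagonal argument applied to the preimage chain $\phi_{k_j}(f^{n(k_j)-m}(r_{k_j}))$ then yields a backward orbit $(y_m)_{m\ge 0}\subset \mathcal U$ for $\widetilde f^{\star}$, of step $\log \lambda$, tending to infinity.

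The final contradiction comes by transporting the hypothesis $M\cap \overline{E_0}=\varnothing$ (established in the paper just before the lemma) through $\phi_{k_j}$: the images $\phi_{k_j}(M)$ avoid a fixed horocyclic slab in $\mathcal U$. In the limit, either the attractor of $\widetilde f^{\star}$ is a bounded subset of $\mathcal U$ disjoint from the slab containing $q_\infty:=\lim_j q_{k_j}$---in which case the forward orbit of $q_\infty$ converging to that attractor outside the slab must be reconciled with the backward orbit of step $\log \lambda$ starting at $q_\infty$, yielding a contradiction via Kobayashi rigidity---or $\phi_{k_j}(M)$ escapes to infinity in $\mathcal U$, making infinity simultaneously the Denjoy-Wolff attractor and a boundary repelling fixed point (of dilation $\geq \lambda$) of $\widetilde f^{\star}$, which is absurd. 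The hard part is precisely the careful control of the moving attractor $\phi_{k_j}(M)$ and the verification that the normal-family limit $\widetilde f^{\star}$ inherits enough dynamical data to execute this dichotomy cleanly.
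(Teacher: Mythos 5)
Your opening paragraph is correct and coincides with the paper's first step: with $a_i=f^{n(k_i)-1}(r_{k_i})$ and $b_i=z_{k_i}$ one gets $a_i\to e_1$ and $k_{\B^q}(a_i,b_i)\to\log\lambda$. The genuine gap is everything after that. The concluding dichotomy for the normal-family limit $\widetilde f^{\star}$ is neither established nor, in one branch, contradictory. Locally uniform convergence of the conjugates $\phi_{k_j}\circ f\circ \phi_{k_j}^{-1}$ gives no control of the forward dynamics of $\widetilde f^{\star}$ in terms of the limit manifold $M$ of $f$: the sets $\phi_{k_j}(M)$ need not converge to anything $\widetilde f^{\star}$-invariant, their escape to infinity would not make infinity the Denjoy--Wolff point of $\widetilde f^{\star}$, and it is not shown that infinity is a boundary repelling point of $\widetilde f^{\star}$ with dilation $\geq\lambda$ (boundary dilations do not automatically pass to such limits of conjugates). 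In the other branch there is simply no absurdity: a point whose forward orbit converges to an interior attractor can perfectly well be the starting point of a backward orbit with bounded step --- that is exactly the situation for $f$ itself at every point of the stable set of $e_1$ --- so no ``Kobayashi rigidity'' argument can be extracted. Moreover the claim that the limit backward orbit of $\widetilde f^{\star}$ has step exactly $\log\lambda$ and tends to infinity is itself unproved (a bounded-step backward orbit of the limit map could converge to an interior point or to a different boundary point); statements of this type are precisely what the lemma and the rest of the proof of Theorem 2 are needed for, and you acknowledge that this ``hard part'' is left undone.

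Note also that your scheme uses the exit condition $z_{k_i}\notin\overline{E_0}$ only to get compactness of $\phi_{k_j}(z_{k_j})$, whereas in the paper it is the crux. The paper renormalizes with automorphisms $\gamma_i$ fixing $e_1$ and sending $a_i$ to $0$ (so that $\overline{E_0}\subset\gamma_i(\overline{E_0})$), extracts a limit $b_\infty$ of $\gamma_i(b_i)$ with $k_{\B^q}(0,b_\infty)=\log\lambda$ and $b_\infty\notin E_0$, hence off the real geodesic from $0$ to $e_1$; on the other hand, the exact limit $k_{\B^q}(0,a_i)-k_{\B^q}(0,b_i)\to\log\lambda$ forces, through the equality case of the triangle inequality along that geodesic (tested with the auxiliary points $c_i$ at distance $\alpha$ on the opposite side of $0$), that $b_\infty$ must lie on the geodesic --- a purely metric contradiction requiring no limit map and no control of $M$ beyond $M\cap\overline{E_0}=\varnothing$. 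To salvage your approach you would have to replace the unproven dynamical dichotomy by a quantitative argument of this kind.
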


\begin{proof}[Proof of Lemma \ref{boundedaway}]
Suppose instead that there exists a subsequence $z_{k_i}\to e_1$. Let $b_i=z_{k_i}$ and $a_i=f^{n(k_i)-1}(r_{k_i})\in \overline E_0$. Since $k_{\mathbb{B}^q}(a_i,b_i)\le k_{\mathbb{B}^q}(r_{k_i},f(r_{k_i}))$ it follows by (\ref{boundedstep}) that
\[
\limsup_{i\to \infty}k_{\mathbb{B}^q}(a_i,b_i)\le \log\lambda.
\] 
It follows that the sequence $(a_i)$ also converges to $e_1$.

By the definition of $\lambda$, we also have
\begin{align*}
\liminf_{i\to\infty}k_{\mathbb{B}^q}(a_i,b_i)&\ge\liminf_{i\to\infty}\left(k_{\mathbb{B}^q}(0,a_i)-k_{\mathbb{B}^q}(0,b_i)\right)\\
&\geq \liminf_{z\to e_1}\left(k_{\mathbb{B}^q}(0,z)-k_{\mathbb{B}^q}(0,f(z))\right)\\
&= \log\lambda.
\end{align*}
We conclude that 
\begin{equation}\label{distancelimit}
\lim_{i\to\infty}k_{\mathbb{B}^q}(a_i,b_i)=\log\lambda.
\end{equation} 


Let  $\gamma_i\in Aut(\mathbb B^q)$ be such that $\gamma_i(a_i)=0$ and $\gamma_i(e_1)=e_1$. Such an automorphism can be obtained
composing a parabolic automorphism fixing $e_1$ and a hyperbolic automorphism fixing $e_1$ and $-e_1$, with dilation $\mu$ at $e_1$ which satisfies $1\leq\mu<\lambda$ since $a_i\in \overline E_0\setminus \overline E_1$. Hence the dilation of $\gamma_i$ at $e_1$ is $\mu$, which implies that 
$$\overline E_0\subset \gamma_i(\overline E_0)\subset \overline E_{-1}.$$

Since $k_{\mathbb{B}^q}(0,a_i)\to\infty$, it follows that $k_{\mathbb{B}^q}(\gamma_i(0),0)\to \infty$. Since the sequence $(\gamma_i(0))$ is  contained in $\overline E_{-1}$,   we obtain $\gamma_i(0)\to e_1$.

 Let $0<\alpha<1$ and set
$$
c_i:=-\alpha\frac{\gamma_i(0)}{\Vert\gamma_i(0)\Vert}.
$$
If we write $\beta:=\log\frac{1+\alpha}{1-\alpha}$, then for every $i$ we have $k_\B(0,c_i)=\beta$. Since $\gamma_i(0)\to e_1$ it follows that $c_i\to c_\infty=(-\alpha, 0,\dots ,0)$.


\begin{figure}[ht]
\centering
\begin{tikzpicture}[line cap=round,line join=round,>=triangle 45,x=1.0cm,y=1.0cm]
\clip(-3.5,-3.5) rectangle (3.5,3.5);
\draw(0,0) circle (3cm);
\fill[gray!40] (1.5,0) circle (1.5);
\draw (1.5,0) circle (1.5);
\draw (-1.95,-0.06)-- (0,0);
\draw (0,0)-- (2.9,0.09);
\draw (0,0)-- (0.8,1.77);
\draw [dash pattern=on 3pt off 3pt] (0,0)-- (3,0);
\draw [shift={(-3.45,5.17)}] plot[domain=4.99:5.61,variable=\t]({1*5.45*cos(\t r)+0*5.45*sin(\t r)},{0*5.45*cos(\t r)+1*5.45*sin(\t r)});
\draw [shift={(2.93,2.28)}] plot[domain=3.37:4.7,variable=\t]({1*2.18*cos(\t r)+0*2.18*sin(\t r)},{0*2.18*cos(\t r)+1*2.18*sin(\t r)});
\begin{scriptsize}
\fill  (0,0) circle (1pt);
\draw (-0.1,-0.2) node {$0$};
\fill  (3,0) circle (1pt);
\draw (3.3,0) node {$e_1$};
\fill (2.9,0.09) circle (1pt);
\draw (2.7,0.3) node {$\gamma_i(0)$};
\fill (-1.95,-0.06) circle (1pt);
\draw (-2.15,-0.1) node {$c_i$};
\fill (0.8,1.77) circle (1pt);
\draw (0.8,2) node {$\gamma_i(b_i)$};
\draw (1.5,-1.3) node {$E_0$};
\end{scriptsize}
\end{tikzpicture}
\caption{The position of the points at the $i$-th step, when $q=1$.}
\end{figure}
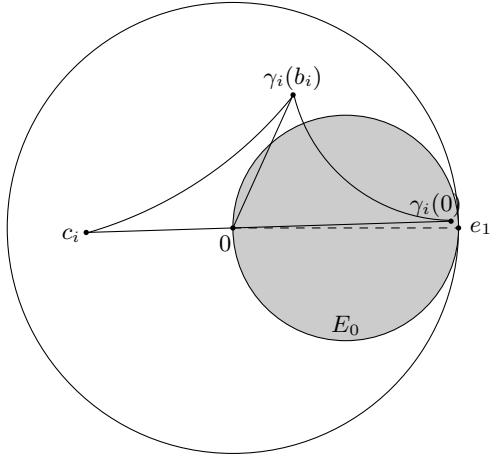

By (\ref{distancelimit}) the sequence $\gamma_i(b_i)$ is relatively compact in $\mathbb B^q$. By taking a subsequence of $k_i$ if necessary, we may assume that $\gamma_i(b_i)\to b_\infty\in \mathbb{B}^q$. Notice that since $b_i\not\in \overline E_0$, we must also have $\gamma_i(b_i)\not\in \overline E_0$. It follows that $b_\infty\not\in E_0$.

We claim that $k_{\mathbb{B}^q}(c_\infty,b_\infty)<\log\lambda+\beta$. Indeed, since $k_{\mathbb{B}^q}(0,b_\infty)=\log\lambda$ and $k_\B(0,c_\infty)=\beta$, we get by triangular inequality that $k_{\mathbb{B}^q}(c_\infty,b_\infty)\leq\log\lambda+\beta$. Equality holds if and only if $b_\infty$ is contained in the real geodesic connecting the origin to $e_1$. 
But this is not possible since such geodesic is contained in the horosphere $ E_0$.
Let $\delta>0$ be such that  $k_{\mathbb{B}^q}(c_\infty,b_\infty)< \log\lambda +\beta-2\delta.$

By the last inequality and by the definition of $\lambda$ we may choose $i$ big enough such that $k_{\mathbb{B}^q}(c_i,\gamma_i(b_i))<\log\lambda+\beta -\delta$ and  $$ k_{\mathbb{B}^q}(\gamma_i(0),0)-k_{\mathbb{B}^q}(\gamma_i(0),\gamma_i(b_i))=k_{\mathbb{B}^q}(0,a_i)-k_{\mathbb{B}^q}(0,b_i)\ge \log\lambda-\delta.$$ We conclude that
\begin{align*}
k_{\mathbb{B}^q}(\gamma_i(0),c_i)-k_{\mathbb{B}^q}(\gamma_i(0),\gamma_i(b_i))&=k_{\mathbb{B}^q}(\gamma_i(0),0)+k_{\mathbb{B}^q}(0,c_i)-k_{\mathbb{B}^q}(\gamma_i(0),\gamma_i(b_i))\\
&\geq \beta +\log\lambda-\delta\\
&>k_{\mathbb{B}^q}(c_i,\gamma_i(b_i)),
\end{align*}
contradicting the triangular inequality. 
\end{proof}

We are now ready to conclude the proof of Theorem \ref{main1}.
 Since the sequence $(z_k)=(f^{n(k)}(r_k))$ is bounded away from $e_1$ and contained in $\overline E_{-1}$, we can extract a subsequence $k_0(h)$ such that  
$(f^{n(k_0(h))}(r_{k_0(h)}))$ converges  to a point $w_0\in \mathbb{B}^q$. Now consider the sequence $(f^{n(k_0(h))-1}(r_{k_0(h)}))$.
Since
$$k_{\mathbb{B}^q}(f^{n(k_0(h))}(r_{k_0(h)}),f^{n(k_0(h))-1}(r_{k_0(h)}))\leq k_{\mathbb{B}^q}(r_{k_0(h)},f(r_{k_0(h)}))\to\log\lambda,$$
we can extract a subsequence $k_1(h)$ of $k_0(h)$ such that $(f^{n(k_1(h))-1}(r_{k_1(h)}))$ converges to a point $w_1\in \mathbb{B}^q\cap \overline E_0$ and 
$f(w_1)=w_0$. Iterating this procedure we obtain for all $j\geq 1$
 a subsequence $(k_{j}(h))$ of $(k_{j-1}(h))$ such that $(f^{n(k_{j}(h))-j}(r_{k_{j}(h)}))$ converges to a point $w_{j}\in  \mathbb{B}^q\cap \overline E_0$ and $f(w_{j})=w_{j-1}$. We notice that, since $n(k)>k$, the expression $(f^{n(k_{j}(h))-j}(r_{k_{j}(h)}))$ is well defined for $h$ large enough.
 
The backward orbit $(w_j)$ has bounded step since, for all $j\geq 0$,
\begin{align*}
k_{\mathbb{B}^q}(w_{j-1},w_{j})&=\lim_{h\to\infty}k_{\mathbb{B}^q}(f^{n(k_{j}(h))-{j-1}}(r_{k_{j}(h)}),f^{n(k_{j}(h))-j}(r_{k_{j}(h)})) \\&\leq \lim_{h\to\infty} k_{\mathbb{B}^q}(r_{k_{j}(h)},f(r_{k_{j}(h)}))\\
&= \log\lambda.
\end{align*}
We are left with showing that $w_j\to e_1$. Since the sequence $(w_j)_{j\geq 1}$ is contained in  $\overline E_0$ it is enough to show that  there is no   subsequence $(w_{m(j)})$ converging to a point  $x\in \mathbb{B}^q$. Assume by contradiction that such a subsequence exists. Then there exists a compact set $K\subset  \overline E_0\cap \mathbb{B}^q$ containing the sequence $(w_{m(j)})$. Recall that if $f$ has no interior fixed points its Denjoy-Wolff point is different from $e_1$, and that if $f$ has fixed points then the limit manifold $M$ does not intersect $\overline E_0$. Hence there exists an integer $N\geq 0$ such that 
$f^n(K)\cap K=\varnothing$ for all $n\geq N$. But this is a contradiction since $(w_j)$ is a backward orbit.

%
%

\section{Proof of Proposition \ref{main2}}

Given a backward  orbit $(x_n)$ one can always assume that it is indexed by integers $n\in\mathbb Z$, defining for all $n\geq 0$, $x_{-n}:= f^{n}(x_0)$.
\begin{lemma}\label{trasl}
Let $(x_n)$ and $(y_n)$ be two  backward orbits with bounded step, both converging to $e_1$. Then $\lim_{n\to+\infty}k_{\mathbb{B}^q}(x_n,y_n)< \infty$ if and only if
\[
\lim_{n\to+\infty}\inf_{m\in\mathbb Z}k_{\mathbb{B}^q}(x_n,y_m)<\infty.
\]
\end{lemma}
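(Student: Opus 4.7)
The plan is to reduce the one-sided infimum to the diagonal distance by controlling the shift of an approximate minimizer. First I would establish monotonicity: Kobayashi contractivity of $f$ applied to $f(x_{n+1})=x_n$ and $f(y_{m+1})=y_m$ yields $k_{\mathbb{B}^q}(x_n,y_m)\le k_{\mathbb{B}^q}(x_{n+1},y_{m+1})$, from which $c_n:=k_{\mathbb{B}^q}(x_n,y_n)$ is non-decreasing in $n$ (set $m=n$) and $a_n:=\inf_{m\in\mathbb{Z}}k_{\mathbb{B}^q}(x_n,y_m)$ is non-decreasing in $n$ (take the infimum on both sides after reindexing $m\mapsto m+1$). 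In particular both limits exist in $[0,+\infty]$, and the ``only if'' direction follows at once from $a_n\le c_n$.

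For the ``if'' direction, assume $\lim_n a_n=L<\infty$ and, for each sufficiently large $n$, pick $m(n)\in\mathbb{Z}$ with $k_{\mathbb{B}^q}(x_n,y_{m(n)})\le L+1$. The heart of the proof is to show that the shift $s(n):=m(n)-n$ stays bounded. I would argue by contradiction, exploiting the iterated inequality $k_{\mathbb{B}^q}(x_{n-j},y_{m(n)-j})\le L+1$ valid for every $j\ge 0$, together with the fact (already used in Section 2) that the forward orbit of any point of $\mathbb{B}^q$ accumulates in a compact set disjoint from $e_1$, either at the Denjoy--Wolff point or on the limit manifold $M$. If $s(n_i)\to+\infty$ along a subsequence, choosing $j=n_i$ produces $k_{\mathbb{B}^q}(x_0,y_{s(n_i)})\le L+1$, contradicting $y_{s(n_i)}\to e_1$. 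If $s(n_i)\to-\infty$ while $m(n_i)$ does not diverge to $+\infty$, then $(y_{m(n_i)})$ is eventually trapped in a compact subset of $\mathbb{B}^q$ (it either takes finitely many values or consists of forward iterates of $y_0$ accumulating on the forward limit set), while $x_{n_i}\to e_1$, again impossible. Finally, if $s(n_i)\to-\infty$ and $m(n_i)\to+\infty$, taking $j=m(n_i)$ gives $k_{\mathbb{B}^q}(x_{-s(n_i)},y_0)\le L+1$, absurd since $-s(n_i)\to+\infty$ forces $x_{-s(n_i)}\to e_1$.

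Once $|s(n)|\le M$ for some integer $M$, the bounded-step assumption together with the monotonicity of $k_{\mathbb{B}^q}(y_j,y_{j+1})$ in $j$ yields $\sup_{j\in\mathbb{Z}}k_{\mathbb{B}^q}(y_j,y_{j+1})<\infty$, so $k_{\mathbb{B}^q}(y_{m(n)},y_n)\le M\sup_j k_{\mathbb{B}^q}(y_j,y_{j+1})$ is uniformly bounded and the triangle inequality delivers $\lim_n c_n<\infty$. I expect the shift-boundedness step to be the only genuine obstacle; it rests entirely on the dynamical asymmetry at $e_1$, where backward iteration attracts orbits to $e_1$ while forward iteration repels them toward the Denjoy--Wolff point or the limit manifold.
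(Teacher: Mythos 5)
Your argument is correct and follows essentially the same route as the paper: both proofs bound the shift of an approximate minimizer by pulling back with $f$ and exploiting that forward iterates stay away from $e_1$ while the backward orbits converge to $e_1$, and then conclude via the triangle inequality and the monotonicity of the step (the paper additionally pigeonholes a constant shift $\alpha$, which your direct bound $|s(n)|\le M$ renders unnecessary). One small imprecision to fix: when $f$ has no interior fixed point and $m(n_i)\to-\infty$, the points $y_{m(n_i)}$ are \emph{not} trapped in a compact subset of $\mathbb{B}^q$ but converge to the Denjoy--Wolff point $p\ne e_1$, so in your second case you should instead invoke the standard fact that $k_{\mathbb{B}^q}(x_{n_i},y_{m(n_i)})\to\infty$ whenever the two sequences accumulate on disjoint compact subsets of $\overline{\mathbb{B}^q}$ (one of them being $\{e_1\}$) --- the same fact the paper uses implicitly when it asserts the existence of $N$ with $k_{\mathbb{B}^q}(x_N,y_m)\ge 2C$ for all $m<0$.
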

\begin{proof}

Notice first that  $\inf_{m\in\mathbb Z}k_{\mathbb{B}^q}(x_n,y_m))$ is non-decreasing in $n$, therefore the limit for $n\to+\infty$ exists (possibly infinite). Suppose now that there exists $C>0$ such that 
\[
\inf_{m\in\mathbb Z}k_{\mathbb{B}^q}(x_n,y_m)<C,\qquad\forall n\in\mathbb Z.
\]
The forward orbit of the point $y_0$ is bounded away from the bondary repelling fixed point $e_1$. Since $\lim_{m\to+\infty}x_m=e_1$, we may assume that there exists $N>0$ such that $k_{\mathbb{B}^q}(x_N,y_m)\ge C$ for all $m<0$. Given such $N$,  since $\lim_{m\to+\infty}y_m=e_1$, we may also find $M>0$ such that $k_{\mathbb{B}^q}(x_N,y_m)\ge C$ for all $m>M$.
For every $n\ge N$ we may find an integer $\alpha(n)$ such that $k_{\mathbb{B}^q}(x_n,y_{\alpha(n)})<C$. By the properties of the Kobayashi distance we deduce that
\[
k_{\mathbb{B}^q}(x_N,y_{\alpha(n)-n+N})<C,
\]
which implies that $-N\le\alpha(n)-n\le M-N$. It follows that we may find a divergent sequence $n_k\ge N$ so that $\alpha(n_k)-n_k=\alpha\in\mathbb Z$. Notice that for every $k$ we have
\[
k_{\mathbb{B}^q}(x_{n_k},y_{n_k+\alpha})=k_{\mathbb{B}^q}(x_{n_k},y_{\alpha(n_k)})<C,
\]
which implies that for every $n\in\mathbb Z$, we have $k_{\mathbb{B}^q}(x_n,y_{n+\alpha})<C$. Finally since $(y_n)$ has bounded step we deduce that
\[
k_{\mathbb{B}^q}(x_n,y_n)\le k_{\mathbb{B}^q}(x_n,y_{n+\alpha})+k_{\mathbb{B}^q}(y_{n+\alpha},y_n)\le C+|\alpha|\sigma(y_n).
\]
The other implication is trivial.
\end{proof}

The following lemma is essentially contained in \cite{Os}, even if not explicitly stated.
\begin{lemma}\label{Ostapyuk}
Let $(z_n)$ be a backward orbit with bounded step converging to $e_1$. Then there exists $M>0$ so that
$$
z_n\in K_0(\tau,M),\qquad\forall n\ge 0.
$$
\end{lemma}
\begin{proof}
By the definition of dilation $\lambda$ we have
$$
\liminf_{n\to\infty}\left( k_{\B^q}(0,z_{n+1})- k_{\B^q}(0,z_{n})\right)\ge \log \lambda,
$$
and therefore that
$$
\liminf_{n\to\infty} \frac{1-\Vert z_n\Vert}{1-\Vert z_{n+1}\Vert}\ge\lambda.
$$
If we write $t_n:=1-\Vert z_n\Vert$, and take $\lambda^{-1}<c<1$ we conclude that there exists $n_0\in\mathbb N$ such that 
$$
t_{n+1}\le ct_n,\qquad \forall n\ge n_0.
$$
By shifting the sequence $(z_n)$ if necessary, we may assume that $n_0=0$. We conclude that 
\begin{equation}
\label{formuletta}
t_{n+k}\le c^kt_n,\qquad\forall n,k\ge 0.
\end{equation}
The result of the lemma follows from \eqref{formuletta} exactly as in the elliptic case of \cite[Theorem 1.8]{Os}.
\end{proof}

Let  $(x_n)_{n\in \mathbb{Z}}$ and $(y_n)_{n\in \mathbb{Z}}$ be two  backward orbits with bounded step, both converging to the boundary repelling fixed point $e_1\in \partial \mathbb{B}^q$. Recall that we denote by $\eta$ the ray connecting the origin to $e_1$. By Lemma \ref{Ostapyuk} the sequence  $(x_n)_{n\in\mathbb{N}}$ is contained in a Kor\'anyi region, and thus it is contained in a  region $A(\eta,L):=\{z\in \B^q\colon k_{\B^q}(z,\eta)<L\}$ for some $L>0$ (see e.g. \cite[Lemma 2.5]{BrGePoCo}). We claim that there exists $R>0$ such that 
$$A(\eta,L)\subset \left\{ z\in \mathbb{B}^q\colon \inf_{m\in\mathbb{Z}} k_{\mathbb{B}^q}(z,y_m)<R\right\}.$$
Once the claim is proved, the result follows by Lemma \ref{trasl}.

It is enough to show that there exists a constant $S>0$ such that for all $w\in \eta,$ $\inf_{m\in \mathbb{Z}} k_{\mathbb{B}^q}(w,y_m)<S$.
Since  $(y_n)_{n\in \mathbb{N}}$  is contained in a Kor\'anyi region,  there exists a constant $C>0$ such that 
$k_{\mathbb{B}^q}(y_n, \eta)< C$ for all $n\in \mathbb{N}$.
Let $a_n$ be a point in $\eta$ such that $k_{\mathbb{B}^q}(a_n,y_n)<C$. Clearly $a_n\to e_1$. Let $w$ be a point in the portion of $\eta$ which connects $a_0$ to $e_1$. There exists $n(w)$ such that $w$ belongs to the portion of $\eta$ which connects $a_n$ to $a_{n+1}$.
Hence $$\inf_m k_{\mathbb{B}^q}(w, y_m)\leq C+k_{\mathbb{B}^q}(a_n,a_{n+1})\leq C+2C+ k_{\mathbb{B}^q}(y_n,y_{n+1})\leq 3C+ \sigma(y_n),$$
concluding the proof of Proposition \ref{main2}.

\end{document}